\newtheorem{teo}{Theorem}
\newtheorem{lemma}[teo]{Lemma}
\newtheorem{coro}[teo]{Corollary}
\newcommand{\R}{\mathbb{R}}
\theoremstyle{remark}
\title[]{Plenty of Morse functions by perturbing with sums of squares}
\author{ A. Lerario}
\thanks{SISSA, Trieste}
\begin{document}

\maketitle

\begin{abstract} We prove that given a smooth function $f:\R^{n}\to \R$ and a submanifold $M\subset \R^{n},$ then the set of $a=(a_{1},\ldots, a_{n})\in \R^n$ such that $(f+q_{a})|_{M}$ is Morse, where $q_{a}(x)=a_{1}x_{1}^{2}+\cdots +a_{n}x_{n}^{2},$ is a residual subset of $\R^{n}.$ A standard transversality argument seems not to work and we need a more refined approach.
\end{abstract}

\section{Introduction} The following problem was posed to the author by K. Kurdyka, to which we express our gratitude for stimulating discussions.\\
Given $a=(a_{1},\ldots, a_{n})\in \R^{n}$ we define the function $q_{a}:\R^{n}\to \R$ by
$$q_{a}(x)=a_{1}x_{1}^{2}+\cdots +a_{n}x_{n}^{2}$$
where in the case $b\in \R^{k}, k\neq n,$ we mean $q_{b}$ to belong to $C^{\infty}(\R^{k},\R)$ and to be defined in the same similar way.\\
Suppose $f\in C^{\infty }(\R^{n},\R)$ and $M\subset \R^{n}$ is a submanifold. Is it true that the set
$$A(f,M)=\{a\in \R^{n}\,|\, (f+q_{a})|_{M}\,\,\textrm{is Morse}\}$$
is residual in $\R^{n}?$\\
The answer to this question turns out to be affirmative, but in a subtle way: standard transversality arguments based on dimension counting do not work and we have to prove it directly.

\section{Failure of parametric transversality argument}
We describe here what it is the usual procedure to prove that given a family of functions $f_{a}:M\to\R$ depending smoothly on the parameter $a\in A$ then the set of $a$ such that $f_{a}$ is Morse is residual in $A.$\\
Let $G:A\times M\to N$ be a smooth map and for every $a\in A$ let $g_{a}:M\to N$ be the function defined by $x\mapsto G(a,x).$ Suppose that $Z\subset N$ is a submanifold and that $F$ is transverse to $Z.$ Then from the \emph{parametric transversality theorem} (see \cite{Hirsch}, Theorem 2.7) it follows that $\{a\in A\,|\, g_{a}\,\, \textrm{is transverse to $Z$}\}$ is residual in $A.$\\
 In the case we want to get Morse condition consider $N=T^{*}M,$ $G(a,x)=d_{x}f_{a}$ and $Z\subset T^{*}M$ the zero section. Then $f_{a}$ is Morse if and only if $g_{a}$ is transverse to $Z.$\\
In our case, letting $M=\R^{n}$, we are led to define $G:\R^{n}\times \R^{n}\to \R^{n}$ by
$$(a,x)\mapsto (\partial f/\partial x_{1}(x)+a_{1}x_{1},\ldots, \partial f/\partial x_{n}(x)+a_{1}x_{n})$$
and we consider $Z=\{0\}\in \R^{n}.$ Then $f_{a}=f+q_{a}$ is Morse if and only if $g_{a}$ is transversal to $\{0\}.$ A condition that would ensure this (trough the parametric transversality theorem) is that $G$ is transverse to $\{0\}.$ Computing the differential of $G$ at the point $(a,x)$ we have for $(v,w)\in T_{(a,x)}(\R^{n}\times \R^{n})$
$$(d_{(a,x)}G)(v,w)=\textrm{He}(f)(x)v+\textrm{diag}(a_{1},\ldots, a_{n})v+\textrm{diag}(x_{1},\ldots, x_{n})w$$ and we see that in general this condition does not hold (for example let $f\equiv 0,$ then at alle the points $(a,x)=(0,a_{2},\ldots, a_{n},0,\ldots, 0)$ we have $G(a,x)=0$ but $\textrm{rk}(d_{(a,x)}G)<n$). 
\section{A direct approach}

First we recall the following Lemma (see \cite{Bott}).
\begin{lemma}Let $f$ be a smooth function on $\R^{n}$ and for $a\in\R^{n}$ define the function $f_{a}$ by $x\mapsto f(x)+a_{1}x_{1}+\ldots +a_{n}x_{n}.$  The set $$\{a\in\R^{n}\,|\, f_{a}\,\,\textrm{is Morse}\}$$ is residual in $\R^{n}.$ \end{lemma}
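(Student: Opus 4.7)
The plan is to reduce the statement to a direct application of Sard's theorem to the gradient map $\nabla f:\R^n\to\R^n$.

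First I would translate the Morse condition into a regular-value condition. Since
$$\frac{\partial f_a}{\partial x_i}(x)=\frac{\partial f}{\partial x_i}(x)+a_i\quad\text{and}\quad \mathrm{He}(f_a)(x)=\mathrm{He}(f)(x),$$
a point $x$ is critical for $f_a$ iff $\nabla f(x)=-a$, and it is nondegenerate iff $\mathrm{He}(f)(x)$ is invertible. But $\mathrm{He}(f)(x)$ is precisely the Jacobian of the map $\nabla f:\R^n\to\R^n$. Hence $f_a$ is Morse if and only if $-a$ is a regular value of $\nabla f$. Therefore
$$\{a\in\R^n\,|\,f_a\text{ is Morse}\}=-\,(\text{regular values of }\nabla f),$$
and it suffices to prove that the set of regular values of $\nabla f$ is residual in $\R^n$.

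Next I would upgrade Sard's theorem (which gives measure zero of the critical values) to residuality, using the fact that domain and codomain have the same dimension. Let $C\subset\R^n$ denote the closed set of critical points of $\nabla f$, and exhaust $\R^n$ by compact sets $K_i=\overline{B(0,i)}$. Then $C_i:=C\cap K_i$ is compact, so $\nabla f(C_i)$ is compact; by Sard it has Lebesgue measure zero, hence is closed and nowhere dense. The full set of critical values is $\nabla f(C)=\bigcup_i \nabla f(C_i)$, a countable union of closed nowhere dense sets, i.e.\ meager. Its complement is residual, and so is its image under $a\mapsto -a$.

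There is essentially no hard obstacle: the whole content of the lemma is the recognition that $\nabla f$ is the map whose Jacobian matrix is the Hessian of $f$, so that Morseness of the perturbed function $f_a$ becomes exactly the regular-value condition to which Sard applies. The only minor care needed is to pass from ``full measure'' to ``residual'', which is handled by the compact exhaustion above — this is precisely the step that will fail in the quadratic-perturbation problem $f+q_a$, because there the relevant map has a different structure and standard parametric transversality (as explained in Section 2) breaks down.
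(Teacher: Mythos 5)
Your proposal is correct and follows essentially the same route as the paper: identify $\nabla f$ as the map whose Jacobian is $\mathrm{He}(f)$, observe that $f_a$ is Morse iff $-a$ is a regular value of $\nabla f$, and invoke Sard. The only difference is that you spell out the passage from ``measure zero'' to ``residual'' via a compact exhaustion, a detail the paper leaves implicit in its citation of Sard's lemma; this is a welcome clarification but not a different argument.
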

\begin{proof}
Define the function $g(x)=(\partial f/\partial x_{1},\ldots, \partial f/\partial x_{n})$ and notice that the Hessian of $f$ is precisely the Jacobian of $g$ and that $x$ is a nondegenerate critical point for $f$ if and only if $g(x)=0$ and the Jacobian $J(g)(x)$ of $g$ at $x$ is nonsingular. Then $g_{a}(x)=g(x)+a$ and $J(g_{a})=J(g).$ We have that $x$ is a critical point for $f_{a}$ if and only if $g(x)=-a;$ moreover  it is a nondegenerate critical point if and only if we also have $J(g)(x)$ is nonsingular, i.e. $a$ is a regular value of $g.$ The conlusion follows by Sard's lemma. 
\end{proof}

We immediately get the following corollary.
\begin{coro}\label{nonz}If $f$ is a smooth function on an open subset $U$ of $\R^{n}$ such that for every $u=(u_{1},\ldots, u_{n})\in U$ we have $u_{i}\neq 0$ for all $i=0,\ldots ,n,$ then 
$$A(f,U)=\{a\in \R^{n}\, |\, f+q_{a} \,\,\textrm{is Morse on $U$}\}$$ is a residual subset of $\R^{n}.$
\end{coro}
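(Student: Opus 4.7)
The plan is to exploit the fact that on each orthant of $\R^n$ the map $x\mapsto(x_1^2,\ldots,x_n^2)$ is a diffeomorphism onto its image, under which the quadratic perturbation $q_a$ pulls back to the linear form $y\mapsto a_1 y_1+\cdots+a_n y_n$. This should reduce the statement to the previous lemma.

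I would first partition $U$ into its intersections with the $2^n$ open orthants: for each $\epsilon=(\epsilon_1,\ldots,\epsilon_n)\in\{\pm 1\}^n$, set $U_\epsilon=U\cap\{x\in\R^n\,|\,\epsilon_i x_i>0 \text{ for all }i\}$. Since every point of $U$ has all coordinates nonzero, the $U_\epsilon$ are pairwise disjoint open subsets whose union is $U$, and therefore $A(f,U)=\bigcap_\epsilon A(f,U_\epsilon)$; being a finite intersection, it is residual as soon as each factor is. Fixing $\epsilon$, the map $\phi_\epsilon:U_\epsilon\to V_\epsilon$, $x\mapsto(x_1^2,\ldots,x_n^2)$, is a diffeomorphism onto an open set $V_\epsilon\subset(0,\infty)^n$, with smooth inverse $y\mapsto(\epsilon_1\sqrt{y_1},\ldots,\epsilon_n\sqrt{y_n})$. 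Setting $\tilde f_\epsilon:=f\circ\phi_\epsilon^{-1}\in C^\infty(V_\epsilon,\R)$, one has
\[
(f+q_a)\circ\phi_\epsilon^{-1}(y)=\tilde f_\epsilon(y)+a_1 y_1+\cdots+a_n y_n,
\]
and since the Morse property is a diffeomorphism invariant, $f+q_a$ is Morse on $U_\epsilon$ iff $\tilde f_\epsilon$ perturbed by this linear form is Morse on $V_\epsilon$.

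The final step is to invoke the previous lemma, used in the slightly more general formulation for an open domain: its proof goes through verbatim and shows that the set of $a\in\R^n$ for which $\tilde f_\epsilon+a_1 y_1+\cdots+a_n y_n$ is Morse on $V_\epsilon$ is exactly $\R^n$ minus the critical values of $\nabla\tilde f_\epsilon:V_\epsilon\to\R^n$, which by Sard's lemma is residual. I do not foresee a real obstacle; the only point to be careful about is that $\tilde f_\epsilon$ need not extend smoothly across $\partial V_\epsilon$, but Sard's lemma applies directly on the open set $V_\epsilon$, so this causes no trouble.
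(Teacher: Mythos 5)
Your proposal is correct and follows essentially the same route as the paper: substitute $y_i=x_i^2$ so that $q_a$ becomes the linear form $\sum a_iy_i$ and invoke the preceding lemma (whose Sard-based proof indeed works verbatim on an open domain). In fact you are slightly more careful than the paper, which asserts that $u_1^2,\ldots,u_n^2$ ``are coordinates on $U$'' even though the squaring map is injective only on each open orthant; your decomposition of $U$ into the $2^n$ orthant pieces, followed by the finite intersection $A(f,U)=\bigcap_\epsilon A(f,U_\epsilon)$, is exactly what is needed to make that step rigorous.
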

\begin{proof}The functions $u_{1}^{2},\ldots,u_{n}^{2}$ are coordinates on $U$ by hypothesis; we let $\tilde f$ be the function $f$ in these coordinates (it is defined on a certain open subset $W$ of $\R^{n}$). Then for every $a\in \R^{n}$ we have that (using the above notation) $\tilde f_{a}$ is Morse on $W$ if and only if $f+q_{a}$ is Morse on $U$ and the conclusion follows applying the previous lemma.
\end{proof}

To prove the general statement we need the following.

\begin{lemma}\label{open}Let $f$ be a smooth function on an (arbitrary) open subset $U$ of $\R^n.$ Then the set $A(f,U)$ is residual in $\R^{n}$.
\begin{proof}
For every $I=\{i_1,\ldots, i_j\}\subset \{1,\ldots,n\}$ define $$H_I=U\cap \{ u_{i}=0,\, i\in I\}\cap\{u_{k}\neq 0,\, k\notin I\}.$$
To simplify notations let $I=\{1,\ldots, j\}$. Notice that if $a=(a_1,\ldots,a_n)$ and $a''=(a_{j+1},\ldots,a_n)$ then $(q_a)|_{H_{I}}=(q_{a''})|_{H_{I}}$ where $q_{a''}:\R^{n-j}\to \R$ is defined as above.  
By corollary \ref{nonz} the set $$A''(f, H_{I} )=\{a''\in \R^{n-j}\, |\, \textrm{$f|_{H_{I}}+q_{a''}$ is Morse on $H_I$}\}$$ is residual in $\R^{n-j}$.
Let $a=(a',a'')\in \R^n$ such that $a''\in A''(f, H_{I} )$ and suppose  $x\in H_I$ is a critical point of $f+q_a;$ then $x$ is also a critical point of $(f+q_a)|_{H_I}=f|_{H_{I}}+q_{a''}.$ Since $a''\in  A''(f, H_{I} )$ then $x$ belongs to a countable set, namely the set $C_{a''}$ of critical points of $ f|_{H_{I}}+q_{a''}$ (each of this critical point must be nondegenerate by the choice of $a''$); moreover we have that $$\textrm{He}(f|_{H_{I}}+q_{a''})(x)=\textrm{He}(f|_{H_{I}})(x)+\textrm{diag}(a_{j+1},\ldots,a_n)$$ is nondegenerate.
Notice that the Hessian of $f+q_a$ at $x$ is a block matrix: $$\textrm{He}(f+q_a)(x)= \left(\begin{array}{c|c}
\textrm{diag}(a_1,\ldots,a_j)+B(x) & C(x) \\ \hline C(x)^{T}& \textrm{He}(f|_{H_{I}}+q_{a''})(x)\end{array}\right). $$
Thus for every $a''=(a_{j+1},\ldots,a_{n})\in A''(f, H_{I})$ and for every $x\in C_{a''}$ consider the polynomial $p_{a'',x}\in \R[t_1,\ldots,t_j]$ defined by $$p_{a'',x}(t_1,\ldots,t_j)=\textrm{det}(\textrm{He}(f)(x)+\textrm{diag}(t_1,\ldots,t_j,a_{j+1},\ldots,a_n))$$
Then the term of maximum degree of $p_{a'',x}$ is $$t_{1}\cdots t_{j}\det (\textrm{He}(f|_{H_{I}}+q_{a'})(x))$$ which is nonzero since $\textrm{det}(\textrm{He}(f|_{H_{I}}+q_{a''})(x))\neq 0$ ($x$ is a \emph{nondegenerate} critical point of $f|_{H_{I}}+q_{a''}$). It follows that  $p_{a'',x}$ is not identically zero; hence its zero locus is a \emph{proper} algebraic set. Thus  for  each $a''\in A''(f, H_{I})$ and each $x\in C_{a''}$ the set $A'(a'',x, I)$ defined by $$\{a'\in \R^j\, |\, \textrm{if $x$ is a critical point of $f+q_{(a',a'')}$ on $H_I$ then it is nondegenerate}\}$$ is residual in $\R^{j}$ (it is the complement of a proper algebraic set);  it follows that $$A'(a'',I)=\{a'\in \R^j\, |\, \textrm{each critical point of } f+q_{(a',a'')}\, \textrm{on } H_{I}\textrm{ is nondegenerate}\}$$ is residual in $\R^{j}$, since it is a countable intersection of residual sets, i.e. $$A'(a'',I)=\bigcap_{x\in C_{a''}}A'(a'',x,I)$$
Thus the set $$A(f,I)=\{(a',a'')\, |\, a''\in A''(f, H_{I}), \, a'\in A'(a'',I)\}$$ (which coincides with the set of $a=(a',a'')\in \R^n$ such that each critical point of  $f+q_a$ on  $H_{I}$ is nondegenerate) is residual: is residual in $a'$ for every $a''$ belonging to a residual set.
Finally $$A(f,U)=\bigcap_{I\subset \{1,\ldots,n\}}A(f,I)$$ is a finite intersection of residual sets, hence residual.
\end{proof}
\end{lemma}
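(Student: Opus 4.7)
The plan is to stratify $U$ according to which coordinates vanish and to apply Corollary \ref{nonz} on each stratum. For $I\subset\{1,\ldots,n\}$, set
\[
H_I=U\cap\{x_i=0,\,i\in I\}\cap\{x_k\neq 0,\,k\notin I\};
\]
the $H_I$ form a finite partition of $U$. An element $a$ lies in $A(f,U)$ exactly when, for each $I$, every critical point of $f+q_a$ sitting inside $H_I$ is nondegenerate. Since the index set has $2^n$ elements, proving residuality of the subset $A(f,I)\subset\R^n$ of parameters satisfying this condition for a single $I$ yields the lemma after a finite intersection.

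Fix $I$ and, after reindexing, assume $I=\{1,\ldots,j\}$; write $a=(a',a'')$ with $a'\in\R^{j}$, $a''\in\R^{n-j}$. On $H_I$ the quadratic $q_a$ reduces to $q_{a''}$, so $(f+q_a)|_{H_I}=f|_{H_I}+q_{a''}$. The stratum $H_I$ is an open subset of the coordinate subspace $\{x_i=0:i\in I\}\cong\R^{n-j}$ on which every surviving coordinate is nonzero, so Corollary \ref{nonz} supplies a residual set $A''(f,H_I)\subset\R^{n-j}$ of parameters $a''$ for which $f|_{H_I}+q_{a''}$ is Morse on $H_I$. For such $a''$ the critical set $C_{a''}$ of the restriction is discrete, hence at most countable.

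Now any critical point of $f+q_a$ lying in $H_I$ restricts to a critical point of $f|_{H_I}+q_{a''}$, so it must belong to $C_{a''}$. Writing the full Hessian at $x\in C_{a''}$ in block form, with upper-left block $\textrm{diag}(a_1,\ldots,a_j)+B(x)$ (here $B(x)$ is independent of $a'$), off-diagonal blocks independent of $a'$, and lower-right block equal to $\textrm{He}(f|_{H_I}+q_{a''})(x)$, one sees that $\det\textrm{He}(f+q_a)(x)$, viewed as a polynomial in $a'$, has top-degree term $a_1\cdots a_j\cdot\det\textrm{He}(f|_{H_I}+q_{a''})(x)$. The coefficient is nonzero by the choice of $a''$, so the polynomial is not identically zero; its vanishing set is a proper algebraic subset of $\R^j$ whose complement is open and dense. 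Intersecting these complements over the countable set $C_{a''}$ yields a residual subset of $\R^j$ consisting of parameters $a'$ for which every critical point of $f+q_{(a',a'')}$ on $H_I$ is nondegenerate.

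The step I expect to require the most care is assembling these slicewise residuality statements into residuality of $A(f,I)$ as a subset of $\R^n$: one needs a Kuratowski--Ulam style principle asserting that if a subset of a product has residual slices over a residual base then it is itself residual in the product. Once this is in place, $A(f,U)=\bigcap_{I\subset\{1,\ldots,n\}}A(f,I)$ is a finite intersection of residual sets and hence residual, completing the proof.
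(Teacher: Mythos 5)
Your proof is essentially identical to the paper's: the same stratification of $U$ into the strata $H_I$, the same application of Corollary \ref{nonz} on each stratum to get a residual set of $a''$, the same leading-term computation showing that $\det\textrm{He}(f+q_{(a',a'')})(x)$ is a polynomial in $a'$ with nonzero top term $a_1\cdots a_j\,\det\textrm{He}(f|_{H_I}+q_{a''})(x)$, and the same finite intersection over the $2^n$ subsets $I$. The slicewise-to-global residuality step you flag as the delicate one is exactly the point the paper also passes over with only the phrase ``residual in $a'$ for every $a''$ belonging to a residual set,'' so you have correctly isolated the one part of the argument that both proofs leave informal.
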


\begin{teo}\label{Morse}Let $f$ be a smooth function on $\R^{n}$ and $M\subset \R^{n}$ be a submanifold. Then the set $A(f,M)$ is residual in $\R^{n}$.
\end{teo}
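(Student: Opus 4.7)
The plan is to reduce to Lemma \ref{open} by locally parametrizing $M$ as the graph of a smooth map over some coordinate subspace of $\R^n$, and then, for each such local chart, applying Lemma \ref{open} in a sliced fashion with respect to the decomposition of $a\in\R^n$ induced by the chart.

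Set $k=\dim M$. For every $p\in M$ the tangent space $T_pM$ projects isomorphically onto the coordinate subspace $\R^{I(p)}$ for some $I(p)\subset\{1,\ldots,n\}$ of cardinality $k$, and the inverse function theorem furnishes a neighborhood $V_p$ of $p$ in $\R^n$ such that the coordinate projection $\pi_{I(p)}\colon M\cap V_p\to \R^{I(p)}$ is a diffeomorphism onto an open set $W_p$. Since $M$ is second countable, extract a countable subcover $\{V_\alpha\}$, and note that $A(f,M)=\bigcap_{\alpha}A(f,M\cap V_\alpha)$, so it suffices to show that each $A(f,M\cap V_\alpha)$ is residual in $\R^n$.

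Fix one such $V=V_\alpha$ and relabel coordinates so that $I=\{1,\ldots,k\}$; then $M\cap V=\{(y,\phi(y))\,|\, y\in W\}$ for a smooth $\phi:W\to\R^{n-k}$, and the pullback of $(f+q_a)|_{M\cap V}$ by $y\mapsto (y,\phi(y))$ takes the shape
$$F_a(y)=\tilde f_{a''}(y)+q_{a'}(y),\qquad \tilde f_{a''}(y):=f(y,\phi(y))+\sum_{j=1}^{n-k}a_{k+j}\phi_j(y)^2,$$
with $a'=(a_1,\ldots,a_k)$ and $a''=(a_{k+1},\ldots,a_n)$. For every fixed $a''\in\R^{n-k}$ the function $\tilde f_{a''}$ is smooth on the open set $W\subset\R^k$, so Lemma \ref{open} asserts that $\{a'\in\R^k\,|\, \tilde f_{a''}+q_{a'}\text{ is Morse on }W\}$ is residual, in particular dense, in $\R^k$.

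The remaining step, which I expect to be the sole technical obstacle, is to promote this slice-wise density in $a'$ to residuality of $A(f,M\cap V)$ as a subset of $\R^n$. To do so I would exhaust $W$ by compact sets $K_1\subset K_2\subset\cdots$ and introduce
$$U_m=\{a\in\R^n\,|\,F_a\text{ has no degenerate critical points in }K_m\}.$$
Each $U_m$ is open, because the jointly continuous map $(a,y)\mapsto \|dF_a(y)\|^2+(\det\textrm{He}(F_a)(y))^2$ is strictly positive on $\{a_0\}\times K_m$ precisely when $a_0\in U_m$, and compactness of $K_m$ upgrades this to openness in $a$. Each $U_m$ is also dense: given $a_0=(a_0',a_0'')\in\R^n$ and $\eps>0$, the paragraph above applied with $a''=a_0''$ yields $a'$ within $\eps$ of $a_0'$ for which $\tilde f_{a_0''}+q_{a'}$ is Morse on all of $W$, and in particular $(a',a_0'')\in U_m$. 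Hence $A(f,M\cap V)=\bigcap_m U_m$ is $G_\delta$-dense, so residual, and intersecting over the countable cover $\{V_\alpha\}$ finishes the proof.
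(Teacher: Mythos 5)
Your proposal is correct and follows essentially the same route as the paper: cover $M$ by countably many charts on which a subset of the ambient coordinates restricts to local coordinates (equivalently, $M$ is locally a graph over a coordinate $k$-plane), split $a=(a',a'')$, absorb the $q_{a''}$-part into the function, and invoke Lemma \ref{open} slice by slice in $a'$. The one place where you genuinely improve on the paper is the final step: the paper passes from ``residual in $a'$ for each fixed $a''$'' to ``residual in $\R^{n}$'' by bare assertion (an implication that is false for arbitrary sets lacking the Baire property), whereas your exhaustion of $W$ by compacta together with the open dense sets $U_m$ exhibits $A(f,M\cap V)$ directly as a dense $G_\delta$, supplying the justification the paper omits.
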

\begin{proof}We basically improve the proof of Proposition 17.18 of \cite{Bott}.\\
Let $u_1,\ldots,u_n:\R^{n}\to\R$ be the coordinates on $\R^n.$ Suppose $M$ is of dimension $m.$
For every point $\overline{x}\in M$ there exists a neighborhood $W$ of $\overline{x}$ in $M$ such that $u_{i_1},\ldots, u_{i_{m}}$ are coordinates for $M$ on $$W\simeq \R^{m},$$ for some $\{i_1,\ldots,i_m\}\subseteq \{1,\ldots,n\};$ since $M$ is second countable, then it can be covered by a countable (finite if $M$ is compact) number of such open sets. For convenience of notations suppose $\{i_1,\ldots,i_m\}=\{1,\ldots,m\}.$\\
Thus $u_{1},\ldots, u_{m}$ are coordinates on $W\simeq \R^{m}$ and $f|_{W},u_{m+1}|_{W},\ldots, u_{n}|_{W}$ are functions of $u_{1}|_{W},\ldots,u_{m}|_{W}.$
Fix $a''=(a_{m+1},\ldots, a_{n})\in\R^{n-m}$ and define $g_{a''}:W\to \R$ by $$g_{a''}=f|_{W}+a_{m+1}u_{m+1}^{2}|_{W}+\cdots+a_{n}u_{m}^{2}|_{W}=(f+a_{m+1}u_{m+1}^{2}+\cdots+a_{n}u_{n}^{2})|_{W}$$
Notice that $g_{a''}$ is not $(f+q_{a})|_{W}$ since we are taking only the last $n-m$ of the $a_{i}'s;$ we still have the freedom of choice $(a_{1},\ldots,a_{m}).$ \\
By lemma \ref{open}, since $u_{1}|_{W},\ldots, u_{m}|_{W}$ are coordinates on $W$, for every $a''\in \R^{n-m}$ the set $$\{a'=(a_1,\ldots,a_m)\in \R^{m}\quad \textrm{s.t.}\quad g_{a''}+a_1u_1^{2}|_{W}+\cdots +a_mu_m^{2}|_{W}\textrm{ is Morse on  $W$}\}$$ is residual in $\R^{m}$. 
Notice that $ g_{a''}+a_1u_1^{2}|_{W}+\cdots +a_mu_m^{2}|_{W}=(f+q_{(a',a'')})|_{W};$ hence for every $a''$ the set of $a'$ such that $(f+q_{(a',a'')})|_{W}$ is Morse on $W$ is residual. Thus the set of $a\in \R^n$ such that $(f+q_a)|_{W}$ is Morse on $W$ is residual (it is residual in $a'$ for each fixed $a''$ hence it is globally residual). It follows that $A(f,M)$ is a countable intersection of residual set, hence residual.\end{proof}

\end{document}